\documentclass[a4paper]{amsart}
\usepackage{amsmath,amssymb,amsxtra,amsfonts,amsthm,comment,url}

\usepackage{tikz}
\usetikzlibrary{arrows,backgrounds,decorations,automata}

\theoremstyle{plain}
\newtheorem{theorem}{Theorem}
\newtheorem{lemma}[theorem]{Lemma}
\newtheorem{corollary}[theorem]{Corollary}
\newtheorem{proposition}[theorem]{Proposition}

\theoremstyle{definition}
\newtheorem{remark}[theorem]{Remark}

\newcommand{\B}{\mathbb}
\newcommand{\C}{\mathcal}

\newcommand{\eps}{\varepsilon}

\begin{document}

\title{Regular sequences and the joint spectral radius}

\author{Michael Coons}
\address{School of Math.~and Phys.~Sciences\\
University of Newcastle\\
Callaghan\\
Australia}
\email{Michael.Coons@newcastle.edu.au}

\thanks{The research of M.~Coons was supported by ARC grant DE140100223.}

\date{\today}

\begin{abstract} We classify the growth of a $k$-regular sequence based on information from its $k$-kernel. In order to provide such a classification, we introduce the notion of a growth exponent for $k$-regular sequences and show that this exponent is equal to the joint spectral radius of any set of a special class of matrices determined by the $k$-kernel.
\end{abstract}

\maketitle

\vspace{-.9cm}
\section{Introduction}

Let $\B{K}$ be a field of characteristic zero. The {\em $k$-kernel} of $f:\B{Z}_{\geqslant 0}\to\B{K}$ is the set $${\rm Ker}_k(f):=\left\{\{f(k^\ell n+r)\}_{n\geqslant 0}:\ell\geqslant 0, 0\leqslant r< k^\ell\right\}.$$ The sequence $f$ is called {\em $k$-automatic} provided the set ${\rm Ker}_k(f)$ is finite \cite{C1972}. In 1992, as a generalisation of automatic sequences, Allouche and Shallit \cite{AS1992} introduced the notion of regular sequences. By their definition, a sequence $f$ taking values in $\B{K}$ is called {\em $k$-regular}, for an integer $k\geqslant 1$, provided the $\B{K}$-vector space $\langle {\rm Ker}_k(f)\rangle_{\B{K}}$ spanned by ${\rm Ker}_k(f)$ is finite dimensional. Connecting regular sequences to finite sets of matrices, Allouche and Shallit \cite[Lemma~4.1]{AS1992} showed that {\em a $\B{K}$-valued sequence $f$ is $k$-regular if and only if there exist a positive integer $d$, a finite set of matrices $\C{A}_f=\{{\bf A}_0,\ldots,{\bf A}_{k-1}\}\subseteq\B{K}^{d\times d}$, and vectors ${\bf v},{\bf w}\in \B{K}^d$ such that $f(n)={\bf w}^T {\bf A}_{i_0}\cdots{\bf A}_{i_s} {\bf v},$ where $(n)_k={i_s}\cdots {i_0}$ is the base-$k$ expansion of $n$.} Moreover, their proof showed that all such collections of matrices can be described (or constructed) by considering spanning sets of $\langle {\rm Ker}_k(f)\rangle_{\B{K}}$.

In their seminal paper, Allouche and Shallit \cite[Theorem~2.10]{AS1992} proved that given a $k$-regular sequence $f$, there is a positive constant $c_f$ such that $f(n)=O(n^{c_f})$.

In this paper, we determine the optimal value of the constant $c_f$. To state our result, we require a few definitions. Let $k\geqslant 1$ be an integer and $f:\B{Z}_{\geqslant 0}\to \B{K}$ be a (not eventually zero) $k$-regular sequence. We define the {\em growth exponent of $f$}, denoted ${\rm GrExp}(f)$, by $${\rm GrExp}(f):=\limsup_{\substack{n\to\infty\\ f(n)\neq 0}} \frac{\log|f(n)|}{\log n}.$$ The {\em joint spectral radius} of a finite set of matrices $\C{A}=\{{\bf A}_0,{\bf A}_1,\ldots, {\bf A}_{k-1}\}$, denoted $\rho(\C{A})$, is defined as the real number $$\rho(\C{A})=\limsup_{n\to\infty}\max_{0\leqslant i_0,i_1,\ldots,i_{n-1}\leqslant k-1}\left\| {\bf A}_{i_0}{\bf A}_{i_1}\cdots{\bf A}_{i_{n-1}}\right\|^{1/n},$$ where $\|\cdot\|$ is any (submultiplicative) matrix norm. This quantity was introduced by Rota and Strang \cite{RS1960} and has a wide range of applications. For an extensive treatment, see Jungers's monograph \cite{J2009}.

\begin{theorem}\label{main} Let $k\geqslant 1$ and $d\geqslant 1$ be integers and $f:\B{Z}_{\geqslant 0}\to \B{K}$ be a (not eventually zero) $k$-regular sequence. If $\C{A}_f$ is any collection of $k$ integer matrices associated to a basis of the $\B{K}$-vector space $\langle{\rm Ker}_k(f)\rangle_{\B{K}}$, then $$\log_k\rho(\C{A}_f)={\rm GrExp}(f),$$ where $\log_k$ denotes the base-$k$ logarithm.
\end{theorem}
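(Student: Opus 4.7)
My plan is to prove the two inequalities $\mathrm{GrExp}(f) \leq \log_k \rho(\C{A}_f)$ and $\mathrm{GrExp}(f) \geq \log_k \rho(\C{A}_f)$ separately. Since the joint spectral radius is a similarity invariant, I may work with a basis of my choice. I would choose $\{g_1, \ldots, g_d\} \subseteq \mathrm{Ker}_k(f)$, a basis of $V := \langle \mathrm{Ker}_k(f) \rangle_{\B{K}}$, so that each $g_j(n) = f(k^{\ell_j} n + r_j)$. Packaging into $\mathbf{g}(n) = (g_1(n), \ldots, g_d(n))^T$, the matrices satisfy the defining recurrence $\mathbf{g}(kn + i) = \mathbf{A}_i \mathbf{g}(n)$, so that $\mathbf{g}(n) = \mathbf{A}_{i_0} \mathbf{A}_{i_1} \cdots \mathbf{A}_{i_s} \mathbf{g}(0)$ when $(n)_k = i_s \cdots i_0$. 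Writing $f = \mathbf{c}^T \mathbf{g}$ for a fixed coefficient vector $\mathbf{c}$, the upper bound is routine: from the definition of $\rho := \rho(\C{A}_f)$, every length-$m$ product has norm at most $C(\rho+\eps)^m$; since $n$ has $\lfloor \log_k n \rfloor + 1$ digits, this yields $|f(n)| = O(n^{\log_k(\rho+\eps)})$, and then $\mathrm{GrExp}(f) \leq \log_k \rho$ upon letting $\eps \to 0$.

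The reverse inequality is the heart of the proof. The key structural fact is that the orbit $\{\mathbf{g}(n):n\geq 0\}$ spans all of $\B{K}^d$: any linear functional $\phi$ annihilating the orbit would give the relation $\sum \phi_j g_j \equiv 0$, contradicting linear independence of the $g_j$. One then shows that this spanning continues to hold when restricted to strictly positive indices --- here the hypothesis that $f$ is \emph{not eventually zero} prevents the pathological case in which some basis element is a ``Kronecker delta at $0$'' direction annihilated by the shift operators $T_i\colon h\mapsto h(kn+i)$ for every $i \geq 1$. Fix therefore indices $n_1, \ldots, n_d \geq 1$ with $\mathbf{g}(n_1), \ldots, \mathbf{g}(n_d)$ a basis of $\B{K}^d$.

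Invoking the Berger-Wang characterisation of the joint spectral radius, for each $\eps \in (0, \rho)$ there is a product $\mathbf{M} = \mathbf{A}_{j_0} \mathbf{A}_{j_1} \cdots \mathbf{A}_{j_{m_0-1}}$ whose ordinary spectral radius satisfies $\rho_{\mathrm{spec}}(\mathbf{M}) \geq (\rho - \eps)^{m_0}$; let $\lambda$ be such an eigenvalue and $\mathbf{v}$ a corresponding eigenvector. Decomposing $\mathbf{v} = \sum c_i \mathbf{g}(n_i)$ with coefficients depending only on the fixed basis, iterating $\mathbf{M}^t$ and using the triangle inequality produces some $i(t)$ with $\|\mathbf{M}^t \mathbf{g}(n_{i(t)})\| \geq c\,|\lambda|^t \geq c(\rho-\eps)^{tm_0}$. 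A direct computation from the recurrence identifies $\mathbf{M}^t \mathbf{g}(n_{i(t)}) = \mathbf{g}(N_t)$ for $N_t = k^{tm_0} n_{i(t)} + J(k^{tm_0}-1)/(k^{m_0}-1)$ with $J = \sum_{s=0}^{m_0-1} k^s j_s$; since $n_{i(t)} \geq 1$, we get $k^{tm_0} \leq N_t \leq C' k^{tm_0}$, so $\log_k N_t = tm_0 + O(1)$. Thus $\|\mathbf{g}(N_t)\| \geq c''\,N_t^{\log_k(\rho-\eps)}$; picking a coordinate $j$ realising the maximum yields $|f(k^{\ell_j} N_t + r_j)| \gtrsim N_t^{\log_k(\rho-\eps)}$ at indices comparable to $N_t$, hence $\mathrm{GrExp}(f) \geq \log_k(\rho-\eps)$, and letting $\eps \to 0$ finishes.

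The principal obstacle lies in the lower bound: the Allouche-Shallit representation $f(n) = \mathbf{c}^T \mathbf{P} \mathbf{g}(0)$ realises $f(n)$ only as a bilinear form, which could in principle be much smaller than $\|\mathbf{P}\|$. The remedy combines two facts. First, the hypothesis that $\C{A}_f$ comes from a \emph{basis} (rather than an arbitrary spanning set) makes the orbit span $\B{K}^d$, so we may replace the fixed vector $\mathbf{g}(0)$ by a well-chosen $\mathbf{g}(n_{i(t)})$ excited by the large eigenvalue of $\mathbf{M}$. Second, the hypothesis that $f$ is not eventually zero lets us demand $n_{i(t)} \geq 1$, aligning the matrix growth at ``depth $m$'' with growth at sequence indices of size $k^m$ and not at indices far smaller; without this alignment, a bound $(\rho-\eps)^m$ at an $O(1)$-sized index would be information-free.
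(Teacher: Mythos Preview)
Your strategy matches the paper's: the upper bound via a Rota--Strang norm with $\|\mathbf{A}_i\|<\rho+\eps$, and the lower bound by locating a product $\mathbf{M}$ of large ordinary spectral radius, exploiting that the orbit $\{\mathbf{g}(n)\}_{n\geq 0}$ spans $\B{K}^d$ (exactly the role of the basis hypothesis), and then reading off a large coordinate $g_j(\cdot)=f(k^{\ell_j}\cdot+r_j)$. The paper packages the last step slightly differently---it decomposes a row vector $\mathbf{x}^T$ with $\mathbf{x}^T\mathbf{y}\neq 0$ in the row orbit and then writes each $g_j$ back as a combination of shifts of $f$, rather than decomposing the eigenvector $\mathbf{v}$ in the column orbit as you do---but the mechanism is the same.

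There is, however, one concrete error. You assert that $\{\mathbf{g}(n):n\geq 1\}$ still spans $\B{K}^d$, on the grounds that ``not eventually zero'' excludes a Kronecker-delta direction. This is false. Take $k=2$, $f(0)=2$, $f(n)=1$ for $n\geq 1$. Then $f$ is not eventually zero, a basis of $\langle\mathrm{Ker}_2(f)\rangle$ drawn from the kernel itself is $\{f,g\}$ with $g\equiv 1$, and $\mathbf{g}(n)=(1,1)$ for every $n\geq 1$, so the positive-index orbit spans only a line; the delta $f-g$ lies in the span regardless of the hypothesis. The repair is easy and preserves your argument: allow $n_i=0$ in the spanning set. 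If the digit word defining $\mathbf{M}$ is not $0^{m_0}$, or if the selected $n_{i(t)}\geq 1$, then $N_t\asymp k^{tm_0}$ and your size estimate goes through unchanged. The sole residual case $\mathbf{M}=\mathbf{A}_0^{m_0}$ with $n_{i(t)}=0$ gives $\mathbf{M}^t\mathbf{g}(0)=\mathbf{g}(0)$, a bounded vector; when $|\lambda|>1$ this term cannot carry the growth of $\lambda^t\mathbf{v}=\sum c_i\mathbf{M}^t\mathbf{g}(n_i)$ for large $t$, so some other index with $n_i\geq 1$ must dominate.
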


We note that Theorem \ref{main} holds for $\B{K}$ replaced by any N\"otherian ring $R$, where $\C{A}_f$ is any collection of $k$ matrices associated to an $R$-module basis of the $R$-module spanned by ${\rm Ker}_k(f)$, where this $R$-module is viewed as an $R$-submodule of the set of a sequences with entries in $R$. In particular, the result holds for the ring $\B{Z}$.

\begin{remark} In engineering circles, for certain choices of $\C{A}$ related to a set $D$ of forbidden sign patterns, the quantity $\log_2 \rho(\C{A})$ is sometimes referred to as the {\em capacity} of the set $D$, denoted ${\rm cap}(D).$ See Jungers, Blondel, and Protasov \cite[Section II]{BJP2006} for details.
\end{remark}

\section{The growth exponent of a regular sequence}

In this section, all matrices are assumed to have entries in $\B{K}$ and all regular sequences are supposed to not eventually be zero.

\begin{lemma}\label{specgen} Let $k\geqslant 1$ be an integer and $\C{A}=\{{\bf A}_0,{\bf A}_1,\ldots,{\bf A}_{k-1}\}$ be a finite set of matrices. Given $\eps>0$ then there is a submultiplicative matrix norm $\|\cdot\|$ such that $\|{\bf A}_i\|<\rho(\C{A})+\eps$ for each $i\in\{0,1,\ldots,k-1\}$.
\end{lemma}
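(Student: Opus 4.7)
The plan is to build, from any fixed submultiplicative matrix norm $\|\cdot\|_0$ on $\B{K}^{d\times d}$ together with a compatible vector norm on $\B{K}^d$ (which I will also denote by $\|\cdot\|_0$), an adapted operator norm that simultaneously controls every ${\bf A}_i$ by just above $\rho(\C{A})$. Setting $\alpha:=\rho(\C{A})+\eps/2$, I would define a new vector norm on $\B{K}^d$ by
\[
\|v\|:=\sup_{m\geqslant 0}\;\max_{0\leqslant i_0,\ldots,i_{m-1}\leqslant k-1}\alpha^{-m}\|{\bf A}_{i_0}\cdots{\bf A}_{i_{m-1}}v\|_0,
\]
with the convention that the $m=0$ term is $\|v\|_0$. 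The norm claimed in the lemma will then be the matrix norm induced by this vector norm, which is automatically submultiplicative.

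The first real step is to verify that the above supremum is finite, so that $\|\cdot\|$ is well defined. Writing $\hat\rho_m(\C{A}):=\max_{i_0,\ldots,i_{m-1}}\|{\bf A}_{i_0}\cdots{\bf A}_{i_{m-1}}\|_0^{1/m}$, the limsup definition of the joint spectral radius gives $\limsup_{m\to\infty}\hat\rho_m(\C{A})=\rho(\C{A})<\alpha$, so there is an index $m_0$ such that $\hat\rho_m(\C{A})<\alpha$ for all $m\geqslant m_0$. For these $m$ one has
\[
\alpha^{-m}\|{\bf A}_{i_0}\cdots{\bf A}_{i_{m-1}}v\|_0\leqslant \alpha^{-m}\hat\rho_m(\C{A})^m\|v\|_0<\|v\|_0,
\]
while for the finitely many $m<m_0$ the corresponding quantity is trivially bounded by some constant multiple of $\|v\|_0$. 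Hence $\|v\|_0\leqslant\|v\|\leqslant C\|v\|_0$ for some $C>0$, and $\|\cdot\|$ is a genuine vector norm equivalent to $\|\cdot\|_0$.

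The desired bound on each ${\bf A}_i$ then follows from a one-step index shift: for any $v\in\B{K}^d$ and $i\in\{0,1,\ldots,k-1\}$,
\[
\|{\bf A}_i v\|=\alpha\sup_{m\geqslant 0}\max_{i_0,\ldots,i_{m-1}}\alpha^{-(m+1)}\|{\bf A}_{i_0}\cdots{\bf A}_{i_{m-1}}{\bf A}_i v\|_0\leqslant\alpha\|v\|,
\]
because the supremum on the right is taken over a subfamily of the length-$(m+1)$ products already appearing in the definition of $\|v\|$. The induced operator norm therefore satisfies $\|{\bf A}_i\|\leqslant\alpha=\rho(\C{A})+\eps/2<\rho(\C{A})+\eps$ for every $i$.

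The only genuine technical obstacle is the finiteness of the supremum defining $\|v\|$; everything afterwards, including submultiplicativity and the final bound, is bookkeeping once the shift identity is in hand. The choice $\alpha=\rho(\C{A})+\eps/2$ rather than $\rho(\C{A})+\eps$ secures the strict inequality demanded by the statement.
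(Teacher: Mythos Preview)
Your argument is correct and is precisely the classical Rota--Strang construction: defining a new vector norm as a discounted supremum over all finite products from $\C{A}$ and then reading off the bound on each ${\bf A}_i$ via the one-step shift. The paper itself does not supply a proof of this lemma at all---it simply cites Blondel, Nesterov, and Theys \cite{BNT2005} and the original Rota--Strang paper \cite{RS1960}---so your proposal in fact fills in exactly the argument those references contain. One tiny remark: you use that $\limsup_m \hat\rho_m(\C{A})=\rho(\C{A})$ implies $\hat\rho_m(\C{A})<\alpha$ for all large $m$; this is of course the correct reading of $\limsup$, and (though you do not need it) the sequence $\hat\rho_m(\C{A})$ actually converges by submultiplicativity and Fekete's lemma.
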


Lemma \ref{specgen} can be found in Blondel et al.~\cite[Proposition 4]{BNT2005}, though it was first given in the original paper of Rota and Strang \cite{RS1960}. 

\begin{proposition}\label{upper} Let $k\geqslant 2$ be an integer and $f:\B{Z}_{\geqslant 0}\to\B{K}$ be a $k$-regular function. For any $\eps>0$, there is a constant $c=c(\eps)>0$ such that for all $n\geqslant 1$, $$\frac{|f(n)|}{n^{\log_k(\rho(\C{A}_f)+\eps)}}\leqslant c,$$ where $\C{A}_f$ is the set any set of matrices associated to a spanning set of $\langle {\rm Ker}_k(f)\rangle_\B{K}$.\end{proposition}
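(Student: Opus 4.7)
The plan is to combine the Allouche--Shallit matrix representation of $f$ with the norm supplied by Lemma \ref{specgen}. Concretely, since $f$ is $k$-regular and $\C{A}_f=\{\mathbf{A}_0,\ldots,\mathbf{A}_{k-1}\}$ arises from a spanning set of $\langle \mathrm{Ker}_k(f)\rangle_\B{K}$, the proof of Allouche and Shallit's Lemma~4.1 furnishes vectors $\mathbf{v},\mathbf{w}\in\B{K}^d$ such that
$$f(n)=\mathbf{w}^T \mathbf{A}_{i_0}\cdots\mathbf{A}_{i_s}\mathbf{v},\qquad (n)_k=i_s\cdots i_0.$$
I will use this as the starting identity.

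Next, given $\eps>0$, I would apply Lemma~\ref{specgen} to obtain a submultiplicative matrix norm $\|\cdot\|$ satisfying $\|\mathbf{A}_i\|<\rho(\C{A}_f)+\eps$ for every $i\in\{0,\ldots,k-1\}$, and fix a compatible vector norm on $\B{K}^d$ (also denoted $\|\cdot\|$). Writing $\gl:=\rho(\C{A}_f)+\eps$, submultiplicativity and the Cauchy--Schwarz-type estimate $|\mathbf{w}^T \mathbf{M}\mathbf{v}|\leqslant \|\mathbf{w}\|\,\|\mathbf{M}\|\,\|\mathbf{v}\|$ yield
$$|f(n)|\leqslant \|\mathbf{w}\|\cdot\|\mathbf{v}\|\cdot\gl^{\,s+1}.$$

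The remaining step is to convert the bound $\gl^{s+1}$ into a bound in $n$. Since $(n)_k=i_s\cdots i_0$ has $s+1=\lfloor\log_k n\rfloor+1$ digits, we have $\log_k n\leqslant s+1\leqslant \log_k n+1$. If $\gl\geqslant 1$, then $\gl^{s+1}\leqslant \gl\cdot\gl^{\log_k n}=\gl\cdot n^{\log_k\gl}$; if $0<\gl<1$, then $\gl^{s+1}\leqslant \gl^{\log_k n}=n^{\log_k\gl}$. In either case $\gl^{s+1}\leqslant \max(1,\gl)\cdot n^{\log_k\gl}$, so choosing
$$c=c(\eps):=\max(1,\rho(\C{A}_f)+\eps)\cdot\|\mathbf{w}\|\cdot\|\mathbf{v}\|$$
gives the stated inequality for every $n\geqslant 1$.

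No serious obstacle is anticipated; the statement is essentially a direct synthesis of the Allouche--Shallit representation and Lemma~\ref{specgen}. The only care needed is the minor case split on whether $\rho(\C{A}_f)+\eps$ is at least or less than $1$, so that the discrete bound $\gl^{s+1}$ is translated into the continuous bound $n^{\log_k\gl}$ with a constant independent of $n$.
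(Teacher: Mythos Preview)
Your argument is correct and follows essentially the same route as the paper: apply Lemma~\ref{specgen} to the Allouche--Shallit matrix representation, bound $|f(n)|$ by $\|\mathbf{w}\|\cdot\|\mathbf{v}\|\cdot(\rho(\C{A}_f)+\eps)^{s+1}$, and convert the digit count into $\log_k n$. Your treatment is in fact slightly more careful than the paper's, which simply invokes $s\leqslant \log_k n$ without separating the cases $\rho(\C{A}_f)+\eps\geqslant 1$ and $\rho(\C{A}_f)+\eps<1$.
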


\begin{proof} Let $\eps>0$ be given and let $\|\cdot\|$ be a matrix norm such that the conclusion of Lemma \ref{specgen} holds. Then $$|f(n)|\leqslant \|{\bf v}\|\cdot\|{\bf w}\|\cdot\prod_{j=0}^s\|{\bf A}_{i_j}\|\leqslant \|{\bf v}\|\cdot\|{\bf w}\|\cdot(\rho(\C{A})+\eps)^s,$$ where the base-$k$ expansion of $n$ is $i_s\cdots i_0.$ Using the bound $s\leqslant \log_k n$ with some rearrangement gives the result.
\end{proof}

\begin{lemma}\label{mAm} Let $k\geqslant 1$ be an integer and $\C{A}=\{{\bf A}_0,{\bf A}_1,\ldots,{\bf A}_{k-1}\}$ be a finite set of matrices. If $\eps>0$ is a real number, then there is a positive integer $m$ and a matrix ${\bf A}_{i_0}\cdots{\bf A}_{i_{m-1}}$, such that $$(\rho(\C{A})-\eps)^m<\rho({\bf A}_{i_0}\cdots{\bf A}_{i_{m-1}})<(\rho(\C{A})+\eps)^m.$$
\end{lemma}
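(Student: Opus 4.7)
The plan is to prove the two inequalities separately and then to pick a common choice of $m$ and product that satisfies both simultaneously.

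For the upper bound I would use Lemma \ref{specgen} directly. Given $\eps > 0$, fix a submultiplicative matrix norm $\|\cdot\|$ with $\|{\bf A}_i\| < \rho(\C{A}) + \eps$ for every $i$. For any product $P = {\bf A}_{i_0}\cdots{\bf A}_{i_{m-1}}$ of length $m$, submultiplicativity yields
$$\rho(P) \leqslant \|P\| \leqslant \prod_{j=0}^{m-1}\|{\bf A}_{i_j}\| < (\rho(\C{A}) + \eps)^m,$$
using the standard fact that the spectral radius of a matrix is dominated by any submultiplicative norm of it. This upper bound is uniform over all products of length $m$, so it will automatically apply to whichever product the lower-bound argument produces.

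For the lower bound I would appeal to the Berger--Wang theorem, which identifies the joint spectral radius with the \emph{generalised} spectral radius:
$$\rho(\C{A}) = \limsup_{m\to\infty}\left(\max_{0\leqslant i_0,\ldots,i_{m-1}\leqslant k-1}\rho({\bf A}_{i_0}\cdots{\bf A}_{i_{m-1}})\right)^{1/m}.$$
Granting this identity, for the prescribed $\eps > 0$ there must exist arbitrarily large $m$ (in particular at least one such $m$) together with a length-$m$ product $P$ such that $\rho(P)^{1/m} > \rho(\C{A}) - \eps$, i.e.\ $\rho(P) > (\rho(\C{A}) - \eps)^m$. Combining this $m$ and $P$ with the upper bound above gives both inequalities at once.

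The main obstacle is the lower bound: the Berger--Wang theorem is the non-elementary ingredient, typically proved by reducing to the case of an irreducible family via a simultaneous block-triangularisation and then exploiting an extremal norm obtained from a compactness argument. Rather than reproduce such an argument here, I would simply cite the result from Jungers's monograph \cite{J2009}, where it is proved in full detail, and move on.
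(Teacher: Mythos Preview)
Your argument is correct. The paper's own proof is a single sentence: ``This is a direct consequence of the definition of the joint spectral radius.'' Since the paper only records the norm-based formula for $\rho(\C{A})$, the lower inequality does not literally follow from that formula alone; it needs precisely the Berger--Wang identity you invoke. Read charitably, the author is treating that identity as part of the folklore attached to the definition, so your approach is not different in content---only more explicit about what is actually being used. Your upper-bound argument via Lemma~\ref{specgen} is a clean way to handle that half, and citing \cite{J2009} for Berger--Wang is appropriate since the paper already leans on that reference.
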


\begin{proof} This is a direct consequence of the definition of the joint spectral radius.
\end{proof}

Now let $k\geqslant 2$ be an integer, and suppose that $f:\B{Z}_{\geqslant 0}\to \mathbb{K}$ is an unbounded $k$-regular sequence. Given a word $w=i_s\cdots i_0\in \{0,\ldots ,k-1\}^*$, we let $[w]_k$ denote the natural number such that $(n)_k=w$.    Let $\{\{f(n)\}_{n\geqslant 0}=\{g_1(n)\}_{n\geqslant 0},\ldots ,\{g_d(n)\}_{n\geqslant 0}\}$ be a basis for the $\B{K}$-vector space $\langle{\rm Ker}_k(f)\rangle_{\B{K}}$.  Then for each $i\in \{0,1,\ldots ,k-1\}$, the sequences $\{g_1(kn+i)\}_{n\geqslant 0},\ldots ,\{g_d(kn+i)\}_{n\geqslant 0}$ can be expressed as $\mathbb{K}$-linear combinations of $\{g_1(n)\}_{n\geqslant 0},\ldots ,\{g_d(n)\}_{n\geqslant 0}$ and hence there is a set of $d\times d$ matrices $\C{A}_f=\{{\bf A}_0,\ldots ,{\bf A}_{k-1}\}$ with entries in $\B{K}$ such that $$ {\bf A}_i[g_1(n),\ldots ,g_d(n)]^T =[g_1(kn+i),\ldots ,g_d(kn+i)]^T$$ for $i=0,\ldots ,k-1$ and all $n\geqslant 0$.  In particular, if $i_s\cdots i_0$ is the base-$k$ expansion of $n$, then $ {\bf A}_{i_0}\cdots {\bf A}_{i_s}[g_1(0),\ldots ,g_d(0)]^T = [g_1(n),\ldots ,g_d(n)]^T$.   (We note that this holds even if we pad the base-$k$ expansion of $n$ with zeros at the beginning.) We call such a set of matrices $\C{A}_f$, constructed in this way, {\em a set of matrices associated to a basis of $\langle {\rm Ker}_k(f)\rangle_\B{K}$}.

This construction allows us to provide a lower bound analogue of Proposition \ref{upper}.

\begin{proposition}\label{lower} Let $k\geqslant 2$ be an integer and $f:\B{Z}_{\geqslant 0}\to\B{K}$ be a $k$-regular function. For any $\eps>0$, there is a constant $c=c(\eps)>0$ such that for infinitely many $n\geqslant 1$, $$\frac{|f(n)|}{n^{\log_k(\rho(\C{A}_f)-\eps)}}\geqslant c,$$ where $\C{A}_f$ is any set of matrices associated to a basis of $\langle {\rm Ker}_k(f)\rangle_\B{K}$.
\end{proposition}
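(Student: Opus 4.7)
The plan is to construct explicitly a sequence of integers $n_N \to \infty$ for which $|f(n_N)|$ is at least a constant times $(\rho(\C{A}_f)-\eps)^{mN}$ for a fixed integer $m$, and which grows at most like $k^{mN}$; the two bounds combined give the required estimate. Without loss of generality $\rho(\C{A}_f) > \eps$, else there is nothing to prove.

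First, I apply Lemma \ref{mAm} to obtain an integer $m\geqslant 1$ and a word $w=j_{m-1}\cdots j_0 \in \{0,\ldots,k-1\}^m$ whose associated matrix product $\mathbf{B}:=\mathbf{A}_{j_0}\cdots\mathbf{A}_{j_{m-1}}$ satisfies $\rho(\mathbf{B})>(\rho(\C{A}_f)-\eps)^m>0$. Let $\lambda\in\B{C}$ be an eigenvalue of $\mathbf{B}$ with $|\lambda|=\rho(\mathbf{B})$ and $\mathbf{y}\in\B{C}^d\setminus\{0\}$ a corresponding eigenvector. Writing $\mathbf{g}(n):=(g_1(n),\ldots,g_d(n))^T$ and $W_N:=[w^N]_k$, the recurrence $\mathbf{A}_i\mathbf{g}(n)=\mathbf{g}(kn+i)$ iterated along the digits of $w$ gives the key identity
$$\mathbf{g}(nk^{mN}+W_N) \;=\; \mathbf{B}^N\mathbf{g}(n)\qquad\text{for all }n\geqslant 0,\;N\geqslant 1.$$

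Next, linear independence of $g_1,\ldots,g_d$ over $\B{K}$ (and hence over $\B{C}$) forces the set $\{\mathbf{g}(n):n\geqslant 0\}$ to span $\B{C}^d$; I select $n_1,\ldots,n_d\in\B{Z}_{\geqslant 0}$ so that $\{\mathbf{g}(n_j)\}$ is a $\B{C}$-basis and expand $\mathbf{y}=\sum_j \alpha_j \mathbf{g}(n_j)$. Setting $m_j^{(N)}:=n_jk^{mN}+W_N$, the identity above combined with linearity of $\mathbf{B}^N$ yields
$$\sum_{j=1}^d \alpha_j\,\mathbf{g}(m_j^{(N)}) \;=\; \mathbf{B}^N \mathbf{y} \;=\; \lambda^N \mathbf{y},$$
so by the triangle inequality there is, for each $N$, an index $j(N)$ with $\|\mathbf{g}(m_{j(N)}^{(N)})\|\geqslant c_0\,\rho(\mathbf{B})^N$, where $c_0:=\|\mathbf{y}\|/(d\max_j|\alpha_j|)>0$.

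Finally, to convert this vector bound into a scalar bound on $|f|$, some coordinate $g_i(m_{j(N)}^{(N)})$ has absolute value at least a fixed fraction of $\|\mathbf{g}(m_{j(N)}^{(N)})\|$; and every $g_i$ is a fixed finite $\B{K}$-linear combination of kernel sequences $n\mapsto f(k^\ell n+r)$, so one more application of the triangle inequality produces $\ell,r$ bounded independently of $N$ such that $|f(k^\ell m_{j(N)}^{(N)}+r)|\geqslant c_1\rho(\mathbf{B})^N$. Set $n_N:=k^\ell m_{j(N)}^{(N)}+r$; there is a fixed $C$ with $n_N\leqslant C k^{mN}$ for all $N$, and hence
$$\frac{|f(n_N)|}{n_N^{\log_k(\rho(\C{A}_f)-\eps)}} \;\geqslant\; \frac{c_1}{C^{\log_k(\rho(\C{A}_f)-\eps)}}\left(\frac{\rho(\mathbf{B})}{(\rho(\C{A}_f)-\eps)^m}\right)^N \;\geqslant\; c>0,$$
independent of $N$; as $n_N\to\infty$ this supplies infinitely many witnesses. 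The main subtlety is the step of accessing the eigenvector: taking $\mathbf{y}=\mathbf{g}(0)$ and reading $f(W_N)=\mathbf{e}_1^T\mathbf{B}^N\mathbf{g}(0)$ directly off the first coordinate is tempting but can fail if $\mathbf{g}(0)$ has no component along the top generalized eigenspace of $\mathbf{B}$; expressing a genuine eigenvector as a $\B{C}$-linear combination of realizable vectors $\mathbf{g}(n_j)$, and then passing through all coordinates and through the expansion of each $g_i$ in kernel sequences, is what circumvents the difficulty.
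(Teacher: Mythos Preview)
Your proof is correct and follows essentially the same route as the paper's (including the appendix): invoke Lemma~\ref{mAm} to obtain a product $\mathbf{B}$ with large spectral radius, use linear independence of the basis $g_1,\dots,g_d$ to realise an eigenvector of $\mathbf{B}$ as a $\mathbb{C}$-linear combination of the vectors $\mathbf{g}(n_j)$, apply $\mathbf{B}^N$ and the digit recurrence to convert this into a lower bound on some $|g_i(m_{j}^{(N)})|$, and finally expand $g_i$ in kernel sequences to reach $|f|$. The only cosmetic difference is that the paper introduces an auxiliary dual vector $\mathbf{x}$ with $\mathbf{x}^T\mathbf{y}\neq 0$ and expands $\mathbf{x}$ rather than $\mathbf{y}$ in the span of the $\mathbf{g}(n)$, but the mechanism is identical.
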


\begin{proof} 
Let $\eps>0$ be given. Then by Lemma \ref{mAm} there is a positive integer $m$ and a matrix ${\bf A}={\bf A}_{i_0}\cdots{\bf A}_{i_{m-1}}$ such that $\rho({\bf A})>(\rho(\C{A}_f)-\eps)^m.$ Let $\lambda$ be an eigenvalue of ${\bf A}$ with $|\lambda|=\rho({\bf A}).$ Then there is an eigenvector ${\bf y}$ such that ${\bf A}{\bf y}=\lambda{\bf y}.$ Pick a vector ${\bf x}$ such that ${\bf x}^T{\bf y}=c_1\neq 0.$ Then $$\left|{\bf x}^T{\bf A}^n{\bf y}\right|=|c_1|\cdot|\lambda|^n=|c_1|\cdot\rho({\bf A})^n>|c_1|\cdot\left(\rho(\C{A}_f)-\eps\right)^{nm}.$$

Using a method developed by Bell, Coons, and Hare \cite{BCH2014}, it follows (see Appendix \ref{Appendix} for details) that there are words $u_1,\ldots,u_d,v_1,\ldots,v_t$ from $\{0,1,\ldots,k-1\}^*$ and a positive constant $c_2$ such that for each $n\geqslant 0$ there is an element from $$\left\{|f([u_i(i_{m-1}\cdots i_0)^nv_j]_k)|:i=1,\ldots,d,\ j=1,\ldots,t\right\},$$ which is at least $c_2(\rho(\C{A}_f)-\eps)^{nm}.$ Here, as previously, we have used the notation $[w]_k$ to be the integer $n$ such that $(n)_k=w$.

If $M=\max\{|u_i|,|v_j|:i=1,\ldots,d,\ j=1,\ldots,t\}$, then $$N=[u_i(i_{m-1}\cdots i_0)^nv_j]_k<k^{2M+nm},$$ so that $\log_k(N)-2M<nm.$ Thus, by the finding of the previous paragraph, there are infinitely many $N$ such that $$\frac{|f(N)|}{N^{\log_k(\rho(\C{A}_f)-\eps)}}=\frac{|f(N)|}{(\rho(\C{A}_f)-\eps)^{\log_kN}}>\frac{c_2}{(\rho(\C{A}_f)-\eps)^{2M}},$$ which is the desired result.
\end{proof}

\begin{proof}[Proof of Theorem \ref{main}] For a given $\eps>0$, Proposition \ref{upper} implies that $$\lim_{n\to\infty}\frac{|f(n)|}{n^{\log_k(\rho(\C{A}_f)+2\eps)}}=0,$$ and Proposition \ref{lower} implies that $$\limsup_{n\to\infty} \frac{|f(n)|}{n^{\log_k(\rho(\C{A}_f)-2\eps)}}=\infty.$$ Taken together these give $$\log_k(\rho(\C{A}_f)-2\eps)\leqslant {\rm GrExp}(f)\leqslant \log_k(\rho(\C{A}_f)+2\eps).$$ Since $\eps$ can be taken arbitrarily small, this proves the theorem.
\end{proof}

We end this section by highlighting one major difference between Proposition~\ref{upper} and Proposition \ref{lower}. Proposition \ref{upper} is true for $\C{A}_f$ related to any spanning set of the $\B{K}$-vector space $\langle {\rm Ker}_k(f)\rangle_\B{K}$, while Proposition \ref{lower} requires $\C{A}_f$ to be associated to a basis of $\langle {\rm Ker}_k(f)\rangle_\B{K}$. In fact, these two propositions give the following corollary.

\begin{corollary} Let $k\geqslant 2$ be an integer and $f:\B{Z}_{\geqslant 0}\to\B{K}$ be a $k$-regular function. If $\C{B}_f$ is any set of matrices associated to $f$ and $\C{A}_f$ is any set of matrices associated to a basis of $\langle {\rm Ker}_k(f)\rangle_\B{K}$, then $\rho(\C{A}_f)\leqslant\rho(\C{B}_f).$
\end{corollary}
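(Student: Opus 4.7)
My plan is to combine Theorem~\ref{main} with Proposition~\ref{upper}, exploiting the asymmetry highlighted in the paragraph preceding the corollary: Proposition~\ref{upper} is valid for \emph{any} set of matrices arising from a spanning set of $\langle {\rm Ker}_k(f)\rangle_\B{K}$, whereas Theorem~\ref{main} (via Proposition~\ref{lower}) requires the stronger hypothesis that the matrices come from a basis.

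First I would apply Theorem~\ref{main} to $\C{A}_f$, since it is built from a basis of $\langle {\rm Ker}_k(f)\rangle_\B{K}$. This immediately produces the identity
\[
\log_k \rho(\C{A}_f) = {\rm GrExp}(f).
\]

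Next I would observe, via the Allouche--Shallit construction recalled in the introduction, that the arbitrary set $\C{B}_f$ associated to $f$ is realised from some spanning set of $\langle {\rm Ker}_k(f)\rangle_\B{K}$; this is precisely the hypothesis of Proposition~\ref{upper}. Hence for each $\eps>0$ there is a constant $c(\eps)>0$ with $|f(n)|\leqslant c(\eps)\, n^{\log_k(\rho(\C{B}_f)+\eps)}$ for all $n\geqslant 1$. Plugging this bound into the definition of ${\rm GrExp}(f)$ and letting $\eps\to 0^+$ yields
\[
{\rm GrExp}(f) \leqslant \log_k \rho(\C{B}_f).
\]

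Chaining the two displays gives $\log_k \rho(\C{A}_f) \leqslant \log_k \rho(\C{B}_f)$, and monotonicity of $\log_k$ produces $\rho(\C{A}_f)\leqslant\rho(\C{B}_f)$; the degenerate case $\rho(\C{A}_f)=0$ is trivial since $\rho(\C{B}_f)\geqslant 0$ always. There is essentially no obstacle in this argument once the two earlier results are in hand; the only point requiring verification is that $\C{B}_f$ does satisfy the spanning-set hypothesis of Proposition~\ref{upper}, and this is immediate from the way sets of matrices associated to $f$ are defined.
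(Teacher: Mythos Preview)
Your argument is correct and is essentially the paper's approach: the paper simply remarks that the corollary follows from Propositions~\ref{upper} and~\ref{lower}, and your route through Theorem~\ref{main} (itself obtained from those two propositions) together with Proposition~\ref{upper} applied to $\C{B}_f$ is just a repackaging of the same idea. The only cosmetic difference is that the paper would compare the lower bound from Proposition~\ref{lower} for $\C{A}_f$ directly against the upper bound from Proposition~\ref{upper} for $\C{B}_f$, rather than passing through ${\rm GrExp}(f)$ as an intermediary.
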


Equality in the conclusion of the above corollary would be desirable, but unfortunately, this is not (in general) the case. To see this, consider the $2$-regular function $f$, where, for $(n)_2=i_s\cdots i_0$, we have $f(n)={\bf w}^T {\bf A}_{i_0}\cdots{\bf A}_{i_s} {\bf v}$, with $$\C{A}_f=\{{\bf A}_0,{\bf A}_1\}=\left\{\left(\begin{matrix} 1 & 0\\ 0 & 1\end{matrix}\right)\right\},\quad {\bf w}^T=[1\ 0],\quad\mbox{and}\quad{\bf v}=[1\ 1]^T.$$ Then also for any number $x>1$, we have $f(n)={\bf x}^T {\bf B}_{i_0}\cdots{\bf B}_{i_s} {\bf y}$, with $$\C{B}_f=\{{\bf B}_0,{\bf B}_1\}=\left\{\left(\begin{matrix} 1&0&0\\ 0&1&0\\ 0&0&x\end{matrix}\right)\right\},\quad {\bf x}^T=[1\ 0\ 0],\quad\mbox{and}\quad{\bf y}=[1\ 1\ 0]^T,$$ and $$\rho(\C{A}_f)=1<x=\rho(\C{B}_f).$$

\appendix\section{{}}\label{Appendix}

For a given $\eps>0$, we had by Lemma \ref{mAm} that there is a positive integer $m$ and a matrix ${\bf A}={\bf A}_{i_0}\cdots{\bf A}_{i_{m-1}}$ such that $\rho({\bf A})>(\rho(\C{A}_f)-\eps)^m.$ Choosing an eigenvalue $\lambda$ of ${\bf A}$ with $|\lambda|=\rho({\bf A}),$ we found vectors ${\bf x}$ and ${\bf y}$ such that ${\bf x}^T{\bf y}=c_1\neq 0$ and \begin{equation}\label{xAy} \left|{\bf x}^T{\bf A}^n{\bf y}\right|=|c_1|\cdot|\lambda|^n=|c_1|\cdot\rho({\bf A})^n>|c_1|\cdot\left(\rho(\C{A}_f)-\eps\right)^{nm}.\end{equation}

In this appendix, we follow an argument of Bell, Coons, and Hare \cite[p.~198]{BCH2014} to provide the existence of words $u_1,\ldots,u_d,v_1,\ldots,v_t$ from $\{0,1,\ldots,k-1\}^*$ such that for each $n\geqslant 0$ there is an element from $$\left\{|f([u_i(i_{m-1}\cdots i_0)^nv_j]_k)|:i=1,\ldots,d,\ j=1,\ldots,t\right\},$$ which is at least $c_2(\rho(\C{A}_f)-\eps)^{nm}.$

To this end, let $k\geqslant 2$ be an integer, suppose that $f:\B{Z}_{\geqslant 0}\to \mathbb{K}$ is an unbounded $k$-regular sequence, and $\C{A}_f=\{{\bf A}_0,\ldots,{\bf A}_{k-1}\}$ be a set of matrices associated to a basis $\{\{f(n)\}_{n\geqslant 0}=\{g_1(n)\}_{n\geqslant 0},\ldots ,\{g_d(n)\}_{n\geqslant 0}\}$ of the $\B{K}$-vector space $\langle {\rm Ker}_k(f)\rangle_\B{K}$.

We claim that the $\mathbb{K}$-span of the vectors $[g_1(i),\ldots ,g_d(i)]$, as $i$ ranges over all natural numbers, must span all of $\mathbb{K}^d$. If this were not the case, then their span would be a proper subspace of $\mathbb{K}^d$ and hence the span would have a non-trivial orthogonal complement.  In particular, there would exist $c_1,\ldots ,c_d\in\B{K}$, not all zero, such that $$c_1g_1(n)+\cdots +c_dg_d(n)=0$$ for every $n$, contradicting the fact that $g_1(n),\ldots ,g_d(n)$ are $\B{K}$-linearly independent sequences. 

Let $\langle\mathcal{A}_f\rangle$ denote the semigroup generated by the elements of $\C{A}_f$.  We have just shown that there exist words ${\bf X}_1,\ldots ,{\bf X}_d$ in $\langle\mathcal{A}_f\rangle$ such that $$  [g_1(0),\ldots ,g_d(0)] {\bf X}_1,\ldots , [g_1(0),\ldots ,g_d(0)]{\bf X}_d$$ span $\mathbb{K}^d$.    

Now consider ${\bf x}^T{\bf A}^n{\bf y}$ as described in the first paragraph of this appendix. By construction, we may write ${\bf x}^T=\sum_j \alpha_j [g_1(0),\ldots ,g_d(0)] {\bf X}_j$ for some complex numbers $\alpha_j$.  
Then 
$${\bf x}^T {\bf A}^n = \sum_{j} \alpha_j [g_1(0),\ldots ,g_d(0)]{\bf X}_j {\bf A}^n .$$     Let $u_j$ be the word in $\{0,1,\dots ,k-1\}^*$ corresponding to ${\bf X}_j$ and let $y=i_s\cdots i_0$ be the word in $\{0,\ldots ,k-1\}^*$ corresponding to ${\bf A}$; that is $y=i_s\cdots i_0$ where ${\bf A} = {\bf A}_{i_s}\cdots {\bf A}_{i_0}$ and similarly for $u_j$. Then we have $$[g_1(0),\ldots ,g_d(0)]{\bf X}_j{\bf A}^n=[g_1([u_j y^n]_k),\ldots ,g_d([u_j y^n]_k)]^T.$$

Write ${\bf y}^T=[\beta_1,\ldots ,\beta_d]$.  Then $${\bf x}^T {\bf A}^n {\bf y} =\sum_{i,j} \alpha_i\beta_j g_j([u_i y^n]_k).$$
By assumption, each of $\{g_1(n)\}_{n\geqslant 0},\ldots ,\{g_d(n)\}_{n\geqslant 0}$ is in the $\mathbb{K}$-vector space generated by ${\rm Ker}_k(f)$, and hence there exist natural numbers $p_1,\ldots ,p_t$ and $q_1,\ldots ,q_t$ with $0\leqslant q_m <k^{p_m}$ for $m=1,\ldots ,t$ such that each of for $s=1,\ldots ,d$, we have
$g_s(n)= \sum_{i=1}^t  \gamma_{i,s} f(k^{p_i} n + q_i)$ for some constants $\gamma_{i,s}\in\B{K}$.  
Then 
$${\bf x}^T{\bf A}^n {\bf y} = \sum_{i,j,\ell} \alpha_i \beta_j \gamma_{\ell,j} f([u_i y^n v_{\ell}]_k),$$ where $v_\ell$ is the unique word in $\{0,1,\ldots ,k-1\}^*$ of length $p_\ell$ such that $[v_\ell]_k=q_\ell$.  Let $K=\sum_{i,j,\ell} |\alpha_i|\cdot |\beta_j|\cdot |\gamma_{\ell,j}|$.  Then since
$|{\bf x}^T {\bf A}^n {\bf y}|\geqslant |c_1|\cdot\left(\rho(\C{A}_f)-\eps\right)^{nm}$ for all $n$,  some element from
$$\big\{ |f([u_i y^n v_j]_k)| : i=1,\ldots, d, j=1,\ldots ,t\}\big\}$$ is at least $(|c_1|/K)\cdot\left(\rho(\C{A}_f)-\eps\right)^{nm}$ for each $n$. \\ 

\noindent{\bf Acknowledgements.} We thank Bj\"{o}rn R\"{u}ffer for several useful conversations.

\bibliographystyle{amsplain}
\providecommand{\bysame}{\leavevmode\hbox to3em{\hrulefill}\thinspace}
\providecommand{\MR}{\relax\ifhmode\unskip\space\fi MR }
\providecommand{\MRhref}[2]{%
  \href{http://www.ams.org/mathscinet-getitem?mr=#1}{#2}
}
\providecommand{\href}[2]{#2}


\end{document}